\documentclass[a4paper,12p]{article}
\addtolength{\hoffset}{-0.7cm}
\addtolength{\textwidth}{2.2cm}
\usepackage{latexsym}
\usepackage[pdftex]{graphicx}
\usepackage{amsmath, amsthm, amsfonts, amssymb}
\usepackage[bookmarksnumbered, plainpages]{hyperref}

\title{A construction of fractal surfaces with function scaling factors on a rectangular grid}

\author{Chol-Hui Yun,$~~$\textrm{Hui-Chol Choi}, $~~$\textrm{Hyong-Chol O}\\ \\
                    {\small\textit{Faculty of Mathematics }}\\
                        {\small\textit{\textbf{Kim Il Sung} University, Pyongyang, D.P.R.Korea}}}

\date{}  
\addtolength{\hoffset}{-1.75cm}   
\addtolength{\textwidth}{3.25cm}

\begin{document}
\maketitle      
\begin{abstract}   
A fractal surface is a set which is a graph of a bivariate continuous function. 
In the construction of fractal surfaces using IFS, vertical scaling factors in IFS are important one which characterizes a fractal 
feature of surfaces constructed. 
We construct IFS with function vertical scaling factors which are 0 on the boundaries of a rectangular grid using arbitrary data 
set on a rectangular grid and give a condition for an attractor of the IFS constructed being a surface. 
Finally, lower and upper bounds of Box-counting dimension of the constructed surface are estimated.
\\ \\ 
{\small\textit{Keywords}: Iterated function system (IFS), fractal surface, vertical scaling factor,  Box-counting dimension}\\ 
\end{abstract}
\section{Introduction}
A fractal surface is a fractal set which is a graph of a bivariate continuous function and has been widely applied in the approximation theory, 
computer graphics, image compression, metallurgy, physics, and geography and so on. 
Barnsley \cite{ban3} defined a fractal interpolation function (FIF) in 1986, which is one whose graph is an attractor of  IFS, 
and presented a construction of fractal curves by fractal interpolation, which has been widely used in many papers \cite{bou}-\cite{mal}, \cite{mas}-\cite{zha}.

By Massopust \cite{mas} was introduced the constructions of fractal surfaces on triangular data set at which the interpolation points 
on the boundary data are coplanar. 
In \cite{ger}, this construction was generalized to allow more general data set.

In many papers, constructions of fractal interpolation surfaces with a data set on a rectangular grid were studied in \cite{bou, dal, fen, mal, met}. 
Dalla \cite{dal} presented the construction of  the fractal surfaces on rectangular data set in which interpolation points on the 
boundary data were collinear. 
By Malysz \cite{mal}, this construction was generalized, where the IFS was constructed with constant vertical scaling factors, 
linear domain contraction mapping and quadratic polynomials. 
And all of the contraction transformations of the IFS constructed had the same vertical scaling factor.

In \cite{met}, this construction was generalized to allow the arbitrary data set, function vertical scaling factors, Lipschitz domain 
contraction mapping and Lipschitz mappings, and lower and upper bounds for the Box-counting dimension of the fractal 
surfaces constructed were estimated. 
This estimation was improved in \cite{yun1}. 
The fractal surfaces were constructed using a recurrent iterated functions system (RIFS) with function vertical scaling factors and 
the Box-counting dimension of the fractal surfaces constructed were estimated in \cite{yun2}. 
Vertical scaling factors in IFS are important one that characterizes the fractal set constructed. 
It is more general that fractal sets in nature have different scaling factors at each point. 
Therefore, we can model better fractal sets in nature using IFS in which vertical scaling factors are functions.

In the construction of the fractal surfaces, interpolation functions of the data points on the end points of  the rectangular and of  the 
given data set  were used in \cite{met}, while interpolation functions of the data points on the end points of  the domains and of  
the given data set were used in \cite{yun2}. 
Feng et al. \cite{fen} presented a construction of fractal surfaces on a new condition that the function vertical scaling factors are 0 
on the grid of rectangular domain, but did not estimate a fractal dimension of surface constructed.

In this paper, we generalize the construction of fractal surfaces in \cite{fen}. 
First, we construct IFS with function vertical scaling factors being 0 on the grid of rectangular domain, 
any Lipschitz functions on the rectangular and interpolation functions of  the given data set. 
Finally, lower and upper bounds for the Box-counting dimension of the constructed surfaces are estimated.    
  
                                            
\section{Construction of fractal surfaces}
In this section, we construct IFS that function vertical scaling factors on the basis of a data set on a rectangular grid and present a condition for an attractor of the IFS constructed being surface.
   
         
\subsection{IFS with function vertical scaling factors}

Let a data set on a rectangular grid be 
\begin{displaymath}
 P=\left\{(x_i,y_j, z_{ij})\in \mathbf{R}^3;i=0, 1,\ldots, n, j=0, 1, \ldots, m \right\} 
\end{displaymath}
\begin{displaymath}
(x_0<x_1<\ldots <x_n, ~ y_0<y_1<\ldots <y_m),
\end{displaymath}
and denote  
\begin{eqnarray}
&&I_{x_{i}}=[x_{i-1},x_{i}],~I_{y_{j}}=[y_{j-1},y_{j}],I_{x}=[x_{0},x_{n}],~I_{y}=[y_{0},y_{m}],\nonumber \\ 
&&N_{nm}=\left\{1,\ldots,n\right\}\times \left\{1,\ldots,m\right\},
~E_{ij}=I_{x_{i}}\times I_{y_{j}},~E=I_{x}\times I_{y},\quad\hbox{ for $\left(i,j\right)\in N_{nm}$,}\nonumber \\  
&& P_{x_{\alpha}}=\left\{\left(x_{\alpha},y_{l},z_{\alpha l}\right)\in P;~l=0,1,\ldots,m~\right\},\ \ \ \alpha =0,1\ldots,n \nonumber \\
&& P_{y_{\beta}}=\left\{\left(x_{k},y_{\beta},z_{k\beta}\right)\in ~\mathrm{P};~k=0,1,\ldots,n~\right\},\ \ \ \beta =0, 1\ldots,m \nonumber 
\end{eqnarray}
We define contraction transformations $L_{ij}:E \rightarrow  E_{ij}, ~\left(i,j\right)\in {N}_{nm}$   by
\begin{displaymath}
 L_{ij}(x, y)=(L_{x_i}(x), ~L_{y_j}(y)), ~(x, y)\in E,
\end{displaymath}
where  $L_{x_i}:I_x \rightarrow I_{x_i}, ~L_{y_j}:I_y \rightarrow I_{y_j}$ are contraction homeomorphisms satisfying the following conditions;
\begin{displaymath}
L_{x_i}(x_\alpha )=x_a, ~L_{y_j}(y_\beta )=y_b, ~(\alpha, ~\beta )\in \left\{0, n\right\}\times \left\{0, m\right\}, (a,~b)\in \left\{i-1,i\right\}\times \left\{j-1, j\right\}
\end{displaymath}
And a function  $F_{ij}:E\times \mathbf{R}\rightarrow \mathbf{R}$  is defined by
\begin{eqnarray}
F_{ij}(x,y,z)=s_{ij}(L_{ij}(x,y))z+Q_{ij}(x,y),   \label{eq1}                  
\end{eqnarray}
where $s_{ij}:E_{ij}\rightarrow \mathbf{R}$  is  a Lipschitz function such that $|s_{ij}(x,y)|<1,(x,y)\in E_{ij}$  , which is called a function vertical scaling factor,  $Q_{ij}:E \rightarrow \mathbf{R}$ is a Lipschitz function.  A Lipschitz (contraction) constant of a Lipschitz (contraction) function is denoted by $L_f(c_f)$ . 

Transformations $W_{ij}:E\times \mathbf{R} \rightarrow E_{ij}\times \mathbf{R},~i=1,\ldots,n,~j=1,\ldots,m$   are defined by
\begin{displaymath}
W_{ij}(x,y,z)=(L_{ij}(x,y),F_{ij}(x,y,z)).
\end{displaymath}

For $\theta (0<\theta<(1-\bar c_L)/\bar L_Q)$ , let define a metric $ \rho _\theta$  on  $\mathbf{R}^3$ as following
 
\begin{displaymath}
\rho _\theta ( (x,y,z), (x^\prime, y^\prime, z^\prime ))=|x-x^\prime | +|y-y^\prime |+\theta |z-z^\prime |, ~ (x,y,z), (x^\prime, y^\prime, z^\prime )\in \mathbf{R}^3,
\end{displaymath}
where $ \bar c_L = \max \left\{c_{L_{ij}}; i=1,\ldots, n, ~j=1,\ldots,m\right\}$ , $ \bar L_Q =\max \left\{L_{Q_{ij}}; i=1,\ldots, n, ~j=1,\ldots,m\right\}$ . We can easily prove that the metric $\rho _\theta$  is equivalent to the Euclidean metric on  $\mathbf{R}^3$ and $W_{ij}:E\times \mathbf{R} \rightarrow E_{ij}\times \mathbf{R},~i=1,\ldots,n,~j=1,\ldots,m$  are contraction transformations with respect to the metric $\rho _\theta$ .

Now, we have an IFS $\left\{\mathbf{R}^3; ~W_{ij},~i=1,\ldots,n,~j=1,\ldots,m \right\}$  corresponding the data set $P$.

 
\subsection{Fractal surfaces}

We present a condition for an attractor of the IFS constructed above being surface, which is a graph of a bivariate continuous function, and construct an operator generating the bivariate continuous function whose graph is the attractor.

Let the function vertical scaling factors $s_{ij}: E_{ij} \rightarrow  \mathbf{R}, ~(i,j)\in N_{nm}$ satisfy the following conditions:
\begin{eqnarray}
s_{ij}(x_{i-1}, y)=0, ~s_{ij}(x_i, y)=0, ~s_{ij}(x, y_{j-1})=0, ~s_{ij}(x, y_j)=0, \label{eq2}
\end{eqnarray}
and let define the Lipschitz functions $Q_{ij}:E \rightarrow  \mathbf{R}, ~(i,j)\in N_{nm}$ as follow;
\begin{eqnarray}
Q_{ij}(x,y)=-s_{ij}(L_{ij}(x,y))g_{ij}(x,y)+h_{ij}(L_{ij}(x,y)), \label{eq3}
\end{eqnarray}
where $g_{ij}:E\rightarrow \mathbf{R}$ are any Lipschitz functions and  $h_{ij}:E_{ij}\rightarrow \mathbf{R}$ are Lipschitz functions whose boundary curves are $q_{i-1}|_{\left\{x_{i-1}\right\}\times I_{y_j}}$, $q_{i}|_{\left\{x_{i}\right\}\times I_{y_j}}$, $r_{j-1}|_{I_{x_i}\times\left\{y_{j-1}\right\}}$, $r_{j}|_{I_{x_i}\times\left\{y_{j}\right\}}$ i.e. 
\begin{eqnarray}
h_{ij}|_{\left\{x_{i-1}\right\}\times I_{y_j}}=q_{i-1}|_{\left\{x_{i-1}\right\}\times I_{y_j}}, h_{ij}|_{\left\{x_{i}\right\}\times I_{y_j}}=q_{i}|_{\left\{x_{i}\right\}\times I_{y_j}},  \label{eq4} \\
h_{ij}|_{I_{x_i}\times\left\{y_{j-1}\right\}}=r_{j-1}|_{I_{x_i}\times \left\{y_{j-1}\right\}}, h_{ij}|_{I_{x_i}\times\left\{y_{j}\right\}}=r_{j}|_{I_{x_i}\times \left\{y_{j}\right\}},   \label{eq5}
\end{eqnarray}
where   $ q_{\alpha}:\left\{x_{\alpha}\right\}\times I_y\rightarrow \mathbf{R} $, $\alpha =0,1,\ldots,n$, $r_{\beta}:I_x\times \left\{y_{\beta}\right\}\rightarrow\mathbf{R}$, $\beta=0,1,\ldots,m$    are continuous functions interpolating data sets $P_{x_{\alpha}}$, $P_{y_{\beta}}$,  respectively.

\newtheorem{Thot}{Theorem}
\begin{Thot}\label{theo1}
 If the function vertical scaling factors $ s _{ij}(x,y):E_{ij}\rightarrow \mathbf{R}, ~(i,j)\in N_{nm}$    and the Lipschitz functions  $Q_{ij}(x,y):E\rightarrow \mathbf{R}, ~(i,j)\in N_{nm}$ in \eqref{eq1} satisfy the above conditions, then there exists a bivariate continuous function whose graph is the attractor of the IFS constructed in Sec 2.1.
\end{Thot}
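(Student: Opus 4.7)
The plan is the Read--Bajraktar\'evi\'c fixed-point approach. Let $\mathcal{F}$ denote the set of continuous functions $f:E\to\mathbf{R}$ whose restrictions to the four edges of $E$ agree with the prescribed interpolants $q_0,q_n,r_0,r_m$; this is a closed subset of $C(E,\mathbf{R})$ in the sup-norm and hence a complete metric space. Define an operator $T:\mathcal{F}\to C(E,\mathbf{R})$ piecewise by
\[ (Tf)(u,v)=F_{ij}\bigl(L_{ij}^{-1}(u,v),\,f(L_{ij}^{-1}(u,v))\bigr), \qquad (u,v)\in E_{ij}. \]
Using \eqref{eq1} and \eqref{eq3}, this unfolds to $s_{ij}(u,v)\bigl[f(L_{ij}^{-1}(u,v))-g_{ij}(L_{ij}^{-1}(u,v))\bigr]+h_{ij}(u,v)$. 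I expect the fixed point of $T$ to be precisely the function whose graph is the attractor.

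The step I anticipate requiring the most care is showing that $T$ is well defined on the edges shared by adjacent cells. If $(u,v)$ lies on an edge common to $E_{ij}$ and a neighbour $E_{i'j'}$, two candidate values arise. Condition \eqref{eq2} makes both $s_{ij}(u,v)$ and $s_{i'j'}(u,v)$ vanish on such an edge, so the formula collapses to $h_{ij}(u,v)$ and $h_{i'j'}(u,v)$ respectively. Conditions \eqref{eq4}--\eqref{eq5} then force the two to coincide, since each restricts to the common interpolant $q_\alpha$ or $r_\beta$ on the shared edge. The same reasoning, applied to the outer boundary of $E$, shows that $Tf|_{\partial E}$ equals the prescribed boundary interpolants, so $Tf\in\mathcal{F}$. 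Continuity inside each cell is inherited from $s_{ij},h_{ij},g_{ij},f$ and $L_{ij}^{-1}$, and continuity across cell edges follows from the agreement just verified.

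Once $T$ is known to act on $\mathcal{F}$, the contraction estimate is immediate: on each $E_{ij}$ the difference $(Tf)(u,v)-(Tg)(u,v)$ equals $s_{ij}(u,v)\bigl[f(L_{ij}^{-1}(u,v))-g(L_{ij}^{-1}(u,v))\bigr]$, whence $\|Tf-Tg\|_\infty\le \bar s\,\|f-g\|_\infty$ with $\bar s:=\max_{(i,j)\in N_{nm}}\sup_{E_{ij}}|s_{ij}|<1$. The Banach fixed-point theorem then yields a unique $f^*\in\mathcal{F}$ with $Tf^*=f^*$.

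It remains to identify the graph $G(f^*):=\{(u,v,f^*(u,v)):(u,v)\in E\}$ with the attractor. A direct change of variables $u=L_{x_i}(x)$, $v=L_{y_j}(y)$ gives $W_{ij}(G(f^*))=\{(u,v,(Tf^*)(u,v)):(u,v)\in E_{ij}\}$; summing over $(i,j)$ and using $Tf^*=f^*$ together with $\bigcup_{ij}E_{ij}=E$ yields $\bigcup_{ij}W_{ij}(G(f^*))=G(f^*)$. Since the IFS admits a unique non-empty compact invariant set by Hutchinson's theorem (the $W_{ij}$ being $\rho_\theta$-contractions, as already observed in Sec.~2.1) and $G(f^*)$ is non-empty and compact, it coincides with the attractor, finishing the proof.
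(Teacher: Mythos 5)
Your proposal is correct and follows essentially the same route as the paper: the Read--Bajraktarevi\'c operator on a complete space of continuous functions with prescribed boundary behaviour, well-definedness across cell edges via the vanishing of $s_{ij}$ in \eqref{eq2} together with the matching conditions \eqref{eq4}--\eqref{eq5}, the sup-norm contraction estimate, and identification of the graph of the fixed point with the attractor. The only (immaterial) difference is that the paper's space $C(E)$ imposes the interpolation conditions $q_\alpha$, $r_\beta$ along all interior grid lines rather than only on $\partial E$; your operator forces those values anyway, so both choices work.
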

\begin{proof}
Let a set  $C(E)$  be
\begin{displaymath}
C(E)=\left\{\varphi \in C^0(E): ~\varphi |_{\left\{x_{\alpha}\right\}\times I_y}=q_{\alpha}, ~\varphi |_{I_x\times \left\{y_{\beta}\right\}}=r_{\beta}, ~\alpha =0,1, \ldots , n, ~\beta =0,1,\ldots ,m\right\}.
\end{displaymath}
We can easily prove that  $(C(E), \Vert \cdot \Vert_\infty )$ is a complete metric space.

For a function  $\varphi (\in C(E)) $, a function $T \varphi:E\rightarrow \mathbf{R} $ is defined  by
\begin{eqnarray}
(T \varphi )(x,y) & = & F_{ij}(L^{-1}_{ij}(x,y),\varphi (L^{-1}_{ij}(x,y))) \nonumber \\
& = & s_{ij}(x,y)(\varphi (L^{-1}_{ij}(x,y))-g_{ij}(L^{-1}_{ij}(x,y)))+h_{ij}(x,y), ~ (x,y)\in E_{ij}.  \label{eq6}
\end{eqnarray}
Then, we have $T \varphi \in C(E)$ .

In fact, for $(x_i,y)\in \left\{x_i\right\} \times I_{y_j}$, $i\in\left\{1,\ldots , n-1\right\}$, $j\in\left\{1,\ldots,m\right\}$  we have
\begin{eqnarray}
(T\varphi )(x_i,y)=F_{ij}(L^{-1}_{ij}(x_i,y), ~ \varphi (L^{-1}_{ij}(x_i,y)))=h_{ij}(x_i,y)=q_i(x_i,y),\nonumber \\
(T\varphi )(x_0,y)=F_{1j}(L^{-1}_{1j}(x_0,y), ~ \varphi (L^{-1}_{1j}(x_0,y)))=h_{1j}(x_0,y)=q_0(x_0,y),\nonumber
\end{eqnarray}
by \eqref{eq4}. Thus,  $T \varphi |_{\left\{x_{\alpha}\right\}\times I_y}=q_{\alpha}$, $\alpha=0,1,\ldots ,n$. Similarly, we can prove that  $T \varphi |_{I_x\times \left\{y_\beta \right\}}=r_{\beta}$, $\beta=0,1,\ldots , m$  by \eqref{eq5}. This shows that  $T \varphi$ is a continuous function on $E$ and $T \varphi\in C(E)$. And, it is clear that $(T \varphi)(x_i, y_j)=z_{ij} $ , $(i,j)\in N_{nm}$.

This means that an operator $T:C(E)\rightarrow C(E)$  is defined by \eqref{eq6}. We can prove easily that the operator $T:C(E)\rightarrow C(E)$    is contraction one with respect to the norm $\Vert \cdot \Vert_\infty $ . 
Therefore,  according to the fixed point theorem in the complete metric space, the operator $T$  has a unique fixed point  $f( \in C(E))$ such that 
\begin{eqnarray*}
f(x,y) & = & F_{ij}(L^{-1}_{ij}(x,y),f(L^{-1}_{ij}(x,y)))  \\
& = & s_{ij}(x,y)(f(L^{-1}_{ij}(x,y))-g_{ij}(L^{-1}_{ij}(x,y)))+h_{ij}(x,y), ~ (x,y)\in E_{ij}, ~(i,j)\in N_{nm}.
\end{eqnarray*}
This shows that $A=Gr(f)$ , i.e.
\begin{displaymath}
Gr(f)= \bigcup_{(i,j)\in N_{nm}}W_{ij}(Gr(f)),
\end{displaymath}
where $Gr(f)$  is a graph of the function $f$.
\end{proof}

\noindent \textbf{Remark 1.} In \cite{fen}, $s_{ij}:E\rightarrow\mathbf{R}$, $s_{ij}(x,y)=\lambda_{ij}(x-x_0)(x_n-x)(y-y_0)(y_m-y) $, $g_{ij}(x,y)\equiv  0$  and  $ h_{ij}(L_{ij}(x, y))=t_{ij}xy+e_{ij}x+r_{ij}y+k_{ij}$. In \cite{met}, $g_{ij}$  are interpolation functions of the data points on the end points of the rectangular and of the data points on the end points of  the domains in \cite{yun2}.\\

\noindent \textbf{Example 1}. Let functions $d_{ij}(t):\mathbf{R}\rightarrow\mathbf{R}, ~(i,j)\in N_{nm}$  be Lipschitz functions such that $|d_{ij}(t)|<1$, $d_{ij}(0)=0$. For Lipschitz functions  $\psi _{ij}:E_{ij}\rightarrow \mathbf{R}$ and constants $\lambda_{x_i},~ \lambda_{x_{i-1}}$, $\lambda_{y_{j}}, ~ \lambda_{y_{j-1}}\in \mathbf{R},~(i,j)\in N_{nm}$, functions $t_{ij}:E_{ij}\rightarrow \mathbf{R},~(i,j)\in N_{nm}$ are defined by Lipschitz functions 
\begin{displaymath}
t_{ij}(x,y)=\psi_{ij}(x, y)(x-x_i)^{\lambda _{x_i}}(x-x_{i-1})^{\lambda _{x_{i-1}}}(y-y_j)^{\lambda_{y_j}}(y-y_{j-1})^{\lambda_{y_{j-1}}},
\end{displaymath}
and functions $s_{ij}:E_{ij}\rightarrow \mathbf{R},~(i,j)\in N_{nm}$ by $s_{ij}(x,y)=d_{ij}(t_{ij}(x,y))$. The functions $s_{ij}(x,y)$  satisfy \eqref{eq2}. \\

\noindent \textbf{Example 2}. Let a data set  $P$ be given as the Table 1 \cite{fen}:

%

\vspace{0.5cm}
\centerline{Table 1: The data set $P$}
\vspace{0.2cm}
\centerline{
\begin{tabular}{c   c   c   c   c   c}
\hline
& &  &  $x~~~$ &  & \\  \cline{2-6}
$y \quad$ & $0 \quad$ & $0.25 \quad$ & $0.5 \quad$ & $0.75 \quad$ & $1 $ \\	
\hline
$0 \quad$ & $0.3 \quad$ & $1.1 \quad$ & $0.2 \quad$ & $1.5 \quad$ & $2 $ \\
$1/3 \quad$ & $0.3 \quad$ & $2 \quad$ & $1.8 \quad$ & $1.5 \quad$ & $2 $ \\
$2/3 \quad$ & $3 \quad$ & $2 \quad$ & $3 \quad$ & $3.3 \quad$ & $3 $ \\
$1 \quad$ & $2 \quad$ & $3 \quad$ & $2.5 \quad$ & $4 \quad$ & $4.5 $ \\
\hline
\end{tabular}
}
\vspace{0.5cm}
\vspace{0.3cm}

The contraction mappings  $L_{x_i}$,  $L_{y_j}, ~i=1,2,3,4, ~j=1,2,3$ are defined by 
\begin{eqnarray}
&& L_{x_1}(x)=x/4,~L_{x_2}(x)=x/4+1/4,~L_{x_3}(x)=x/4+2/4,~L_{x_4}(x)=x/4+3/4,\nonumber\\
&& L_{y_1}(y)=y/3, ~L_{y_2}(y)=y/3+1/3, ~L_{y_3}(y)=y/3+2/3\nonumber
\end{eqnarray}
 and functions  $ q_{\alpha}:\left\{x_{\alpha}\right\}\times I_y\rightarrow \mathbf{R} $, $\alpha =0,1,2,3,4$, $r_{\beta}:I_x\times \left\{y_{\beta}\right\}\rightarrow\mathbf{R}$, $\beta=0,1,2,3$, $h_{ij}:E_{ij} \to \mathbf{R}$, $i=1,2,3,4, j=1,2,3$ are as following:
 
\begin{eqnarray*}\label{d}
q_0(0,y)=\left\{
\begin{array}{ll}
18y^2-6y+0.3, & y\in I_{y_1}\\
18y^2-9.9y+1.6, & y\in I_{y_2} \\
-18y^2+27y-7, & y\in I_{y_3} \\
\end{array}\right. 
q_1(0.25,y)=\left\{
\begin{array}{ll}
4.5y^2+1.2y+1.1, & y\in I_{y_1}\\
9y^2-9y+4, & y\in I_{y_2} \\
18y^2-27y+12, & y\in I_{y_3} \\
\end{array}\right. 
\end{eqnarray*}

\begin{eqnarray*}\label{d}
q_2(0.5,y)=\left\{
\begin{array}{ll}
8.1y^2+2.1y+0.2, & y\in I_{y_1}\\
9y^2-5.4y+2.6, & y\in I_{y_2} \\
-9y^2+13.5y-2, & y\in I_{y_3} \\
\end{array}\right. 
q_3(0.75,y)=\left\{
\begin{array}{ll}
9y^2-3y+1.5, & y\in I_{y_1}\\
9y^2-3.6y+1.7, & y\in I_{y_2} \\
-4.5y^2-5.4y+4.9, & y\in I_{y_3} \\
\end{array}\right. 
\end{eqnarray*}

\begin{eqnarray*}\label{d}
q_4(1,y)=\left\{
\begin{array}{ll}
9y^2-3y+2, & y\in I_{y_1}\\
18y^2-15y+5, & y\in I_{y_2} \\
9y^2-10.5y+6, & y\in I_{y_3} \\
\end{array}\right. 
\end{eqnarray*}
 
 \begin{eqnarray*}\label{d}
r_0(x,0)=\left\{
\begin{array}{ll}
6.4x^2+1.6x+0.3, & x\in I_{x_1}\\
-16x^2+8.4x, & x\in I_{x_2} \\
16x^2-14.8x+3.6, & x\in I_{x_3} \\
4.8x^2-6.4x+3.6, & x\in I_{x_4} \\
\end{array}\right.
r_1(x,\frac{1}{3})=\left\{
\begin{array}{ll}
8x^2+4.8x+0.3, & x\in I_{x_1}\\
-16x^2+11.2x+0.2, & x\in I_{x_2} \\
-9.6x^2+10.8x-1.2, & x\in I_{x_3} \\
4.8x^2-6.4x+3.6, & x\in I_{x_4} \\
\end{array}\right.
\end{eqnarray*}

 \begin{eqnarray*}\label{d}
r_2(x,\frac{2}{3})=\left\{
\begin{array}{ll}
-32x^2+4x+3, & x\in I_{x_1}\\
32x^2-20x+5, & x\in I_{x_2} \\
3.2x^2-2.8x+3.6, & x\in I_{x_3} \\
-16x^2+26.8x-7.8, & x\in I_{x_4} \\
\end{array}\right.
r_3(x,1)=\left\{
\begin{array}{ll}
32x^2-4x+2, & x\in I_{x_1}\\
-16x^2+10x+1.5, & x\in I_{x_2} \\
16x^2-14x+5.5, & x\in I_{x_3} \\
16x^2-26x+14.5, & x\in I_{x_4} \\
\end{array}\right.
\end{eqnarray*}

\begin{eqnarray*}
&&  h_{11}(x,y)=18y^2+4.8x^2y-54xy^2+6.4x^2+27.6xy+1.6x-6y+0.3, \\
&&  h_{12}(x,y)=18y^2-120x^2y-36xy^2+48x^2+33.6xy-2.4x-9.9y+1.6, \\
&&  h_{13}(x,y)=-18y^2+192x^2y+144xy^2-160x^2-264xy+116x+27y-7, \\
&&  h_{21}(x,y)=0.9y^2+14.4xy^2-16x^2+3.6xy+8.4x+0.3y, \\
&&  h_{22}(x,y)=9y^2+144x^2y-64x^2-93.6xy+42.4x+5.4y-2.6, \\
&&  h_{23}(x,y)=45y^2-144x^2y-108xy^2+128x^2+270xy-152x-85.5y+42,  \\
&&  h_{31}(x,y)=6.3y^2-76.8x^2y+3.6xy^2+16x^2+75.6xy-14.8x-16.5y+3.6, \\
&&  h_{32}(x,y)=9y^2+38.4x^2y-22.4x^2-40.8xy+24.4x+5.4y-4, \\
&&  h_{33}(x,y)=-36y^2+38.4x^2y+54xy^2-22.4x^2-123.6xy+55.6x+65.7y-24.2, \\
&&  h_{41}(x,y)=-18y^2+192x^2y+144xy^2-160x^2-264xy+116x+27y-7,  \\
&&  h_{42}(x,y)=-18y^2-62.4x^2y+36xy^2+25.6x^2+63.6xy-31.6x-16.2y+11,  \\
&&  h_{43}(x,y)=-9y^2+96x^2y+18xy^2-80x^2-188.4xy+144.4x+81.9y-58.4.
\end{eqnarray*} 
In a), b) of the Figure 1 and Figure 2, the function vertical scaling factors  $s_{ij}(x,y)$ , $(i,j)\in N_{nm}$ are defined as follows, respectively: 
\begin{eqnarray*}
a) && s_{11}(x,y)=2120xy(x-0.25)(y-1/3),~s_{12}(x,y)=150x(x-0.25)(y-1/3)(y-2/3), \\
&& s_{13}(x,y)=400x(x-0.25)(y-2/3)(y-1),~s_{21}(x,y)=-2111(x-0.25)(x-0.5)y(y-1/3), \\
&& s_{22}(x,y)=2300(x-0.25)(x-0.5)(y-1/3)(y-2/3),~s_{23}(x,y)=-950(x-0.25)(x-0.5)(y-2/3)(y-1), \\
&& s_{31}(x,y)=333(x-0.5)(x-0.75)y(y-1/3),~s_{32}(x,y)=-1903(x-0.5)(x-0.75)(y-1/3)(y-2/3), \\
&& s_{33}(x,y)=435(x-0.5)(x-0.75)(y-2/3)(y-1),~s_{41}(x,y)=-2123(x-0.75)(x-1)y(y-1/3), \\
&& s_{42}(x,y)=666(x-0.75)(x-1)(y-1/3)(y-2/3),~s_{43}(x,y)=-2119(x-0.75)(x-1)(y-2/3)(y-1),
\end{eqnarray*}
\begin{eqnarray*}
b) && s_{11}(x,y)=2119xy(x-0.25)(y-1/3),~s_{12}(x,y)=1580x(x-0.25)(y-1/3)(y-2/3), \\
&& s_{13}(x,y)=-2111x(x-0.25)(y-2/3)(y-1),~s_{21}(x,y)=1888(x-0.25)(x-0.5)y(y-1/3), \\
&& s_{22}(x,y)=2300(x-0.25)(x-0.5)(y-1/3)(y-2/3),~s_{23}(x,y)=-2103(x-0.25)(x-0.5)(y-2/3)(y-1), \\
&& s_{31}(x,y)=1989(x-0.5)(x-0.75)y(y-1/3),~s_{32}(x,y)=-1903(x-0.5)(x-0.75)(y-1/3)(y-2/3), \\
&& s_{33}(x,y)=2003(x-0.5)(x-0.75)(y-2/3)(y-1),~s_{41}(x,y)=-2123(x-0.75)(x-1)y(y-1/3), \\
&& s_{42}(x,y)=1673(x-0.75)(x-1)(y-1/3)(y-2/3),~s_{43}(x,y)=-2118(x-0.75)(x-1)(y-2/3)(y-1).
\end{eqnarray*}
And functions $g_{ij}:E\rightarrow\mathbf{R}$ are $g_{ij}(x,y)\equiv  0$, $g_{ij}(x,y)=\sin (\pi ^2xy)$  in the Figure 1 and the Figure 2, respectively.

The Figure 1 and the Figure 2 are fractal surfaces generated by the IFS constructed with $L_{ij}, Q_{ij}, s_{ij}$ above.

\begin{figure}
\centering
\includegraphics[width=12cm , height=20cm]{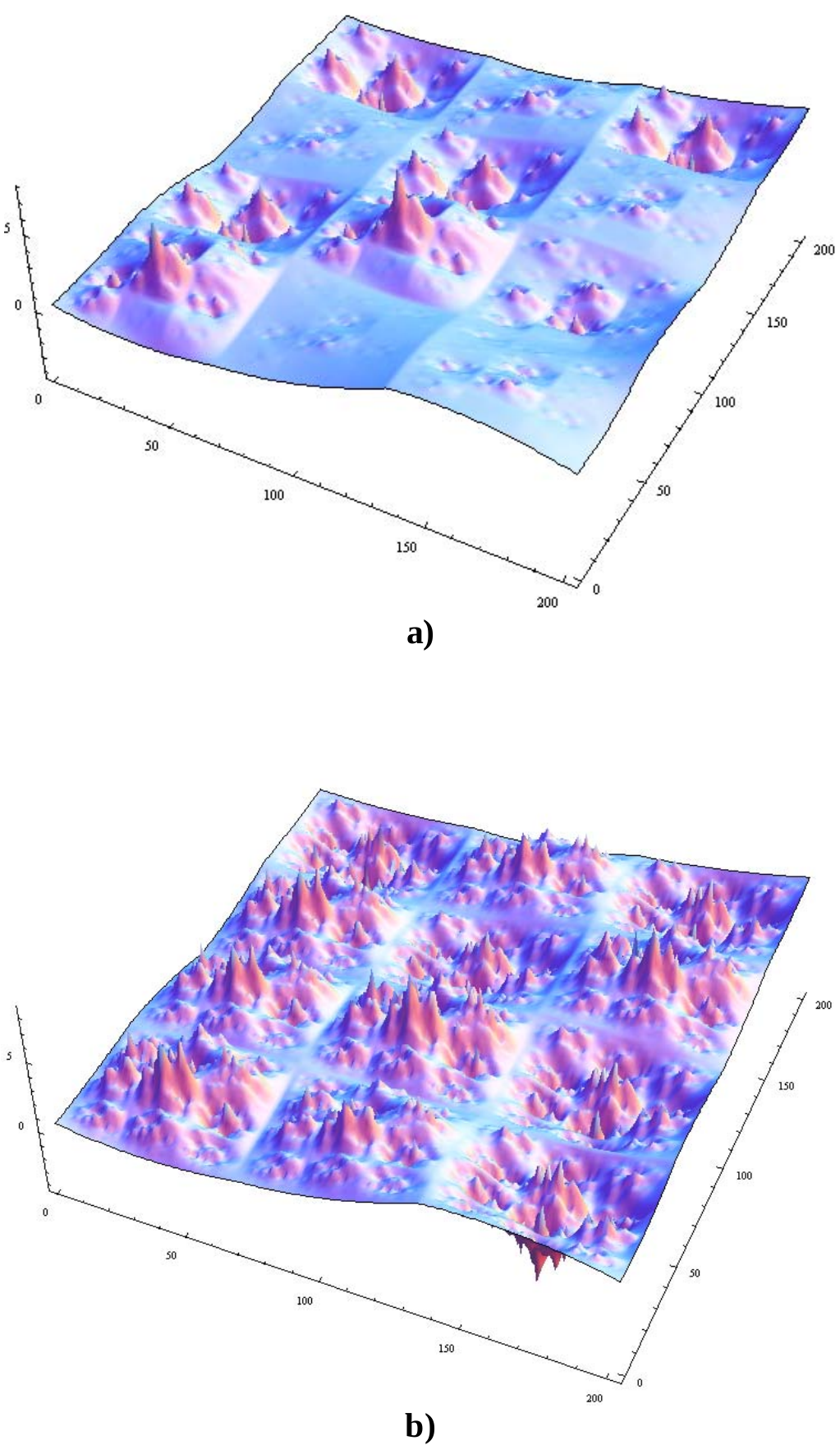}
\caption{Fractal surfaces with the function vertical scaling factors $g_{ij}(x,y)\equiv  0$}
\end{figure}

\begin{figure}
\centering
\includegraphics[width=12cm , height=20cm]{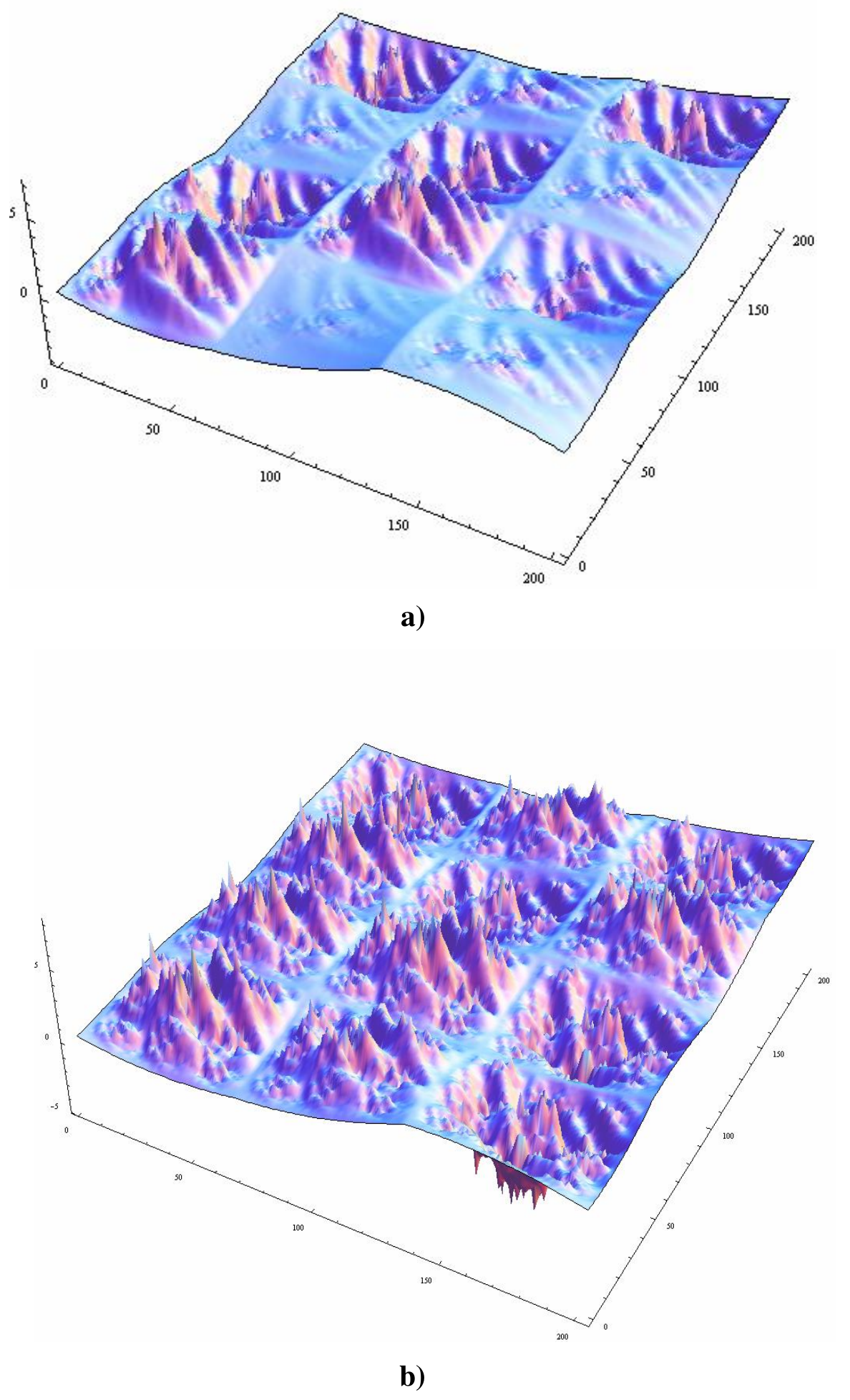}
\caption{Fractal surfaces with the function vertical scaling factors $g_{ij}(x,y)=\sin(\pi ^2xy)$}
\end{figure} 

\noindent \textbf{Remark 2} We can generate $h_{ij}(x,y)~(i,j)\in N_{nm}$  using the construction of fractal surfaces in \cite{yun3}, which gives us the fractal surfaces that are not smooth on the grids of rectangular domain.


\section{Box-counting dimension of the fractal surface.}
In this section, we estimate Box-counting dimension of the surfaces constructed in the Sec. 2 with the data set 
\begin{displaymath}
P=\left\{\left(x_0+\frac{x_n-x_0}{n}i, ~y_0+\frac{y_n-y_0}{n}j,~z_{ij}\right)\in \mathbf{R}^3;~i,j=0,\ldots,n  \right\}
\end{displaymath}

Since there is a bi-Lipschitz mapping between arbitrary rectangular $[a,b]\times[c,d]$  in  $\mathbf{R}^2$ and $[0,1]\times[0,1]$ , and Box-counting dimension is invariant under the bi-Lipschitz mapping, we can suppose that $E=[0,1]\times[0,1]$. Then the data set  $P$ is as follows;
 \begin{displaymath}
P=\left\{\left(\frac{i}{n}, ~\frac{j}{n},~z_{ij}\right)\in \mathbf{R}^3;~i,j=0,\ldots,n  \right\}.
\end{displaymath}

Let denote $\mathring{E}_{ij}:=E_{ij} \setminus \partial E_{ij}$, $\bar{s}_{ij}:=\max_{\mathring{E}_{ij}}\left\vert s_{ij}(x,y) \right\vert$,     
 $\underline{s}_{ij}:=\min_{\mathring{E}_{ij}} \left\vert s_{ij}(x,y) \right\vert (>0)$   and the surface constructed in the Theorem \ref{theo1} by $A(=Gr(f))$. \\

\begin{Thot}\label{gen}
 If there exists  $\alpha_0(\in \{1,\ldots,n-1\})$ $($or $\beta_0(\in \{1,\ldots,n-1\}))$ such that $P_{x_{\alpha_0}} ($or $P_{y_{\beta_0}})$ are not collinear, then box-counting dimension  $ \mathrm{dim}_BA$ of the surface $A$  is as follows:

$\mathrm{1)} $ If $\sum_{i,j=1}^n\underline s_{ij}>n$ , then
 \begin{displaymath}
 1+\log_n^{\sum_{i,j=1}^n\underline s_{ij}} \leq  \mathrm{dim}_BA \leq 1+\log_n^{\sum_{i,j=1}^n\bar s_{ij}}.
  \end{displaymath}

$\mathrm{2)}$	If $\sum_{i,j=1}^n\bar s_{ij} \leq n$ , then 
 \begin{displaymath}
 \mathrm{dim}_BA=2.
  \end{displaymath}
 \end{Thot}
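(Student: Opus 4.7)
The natural approach is to use the $n$-adic decomposition of $E$ induced by the IFS: at level $k$, define $L_{\sigma_k}=L_{i_1j_1}\circ\cdots\circ L_{i_kj_k}$ and $E_{\sigma_k}=L_{\sigma_k}(E)$ for $\sigma_k=((i_1,j_1),\ldots,(i_k,j_k))\in N_{nm}^k$. These are $n^{2k}$ congruent closed squares of side $1/n^k$ partitioning $E$, and the invariance $Gr(f)=\bigcup W_{ij}(Gr(f))$ iterates to $Gr(f|_{E_{\sigma_k}})=W_{\sigma_k}(Gr(f))$. The whole proof revolves around estimating the vertical oscillation $R_{\sigma_k}(f):=\sup_{E_{\sigma_k}}f-\inf_{E_{\sigma_k}}f$ and converting it into $\delta$-box counts with $\delta=1/n^k$.

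For the upper bound, I would cover $A$ at scale $\delta=1/n^k$. On each $E_{\sigma_k}$, the graph of $f$ is covered by at most $n^kR_{\sigma_k}(f)+2$ cubes of side $\delta$. Iterating
\[
f(x,y)=s_{ij}(x,y)\bigl[f(L_{ij}^{-1}(x,y))-g_{ij}(L_{ij}^{-1}(x,y))\bigr]+h_{ij}(x,y),\quad (x,y)\in E_{ij},
\]
and using that $s_{ij},g_{ij},h_{ij}$ are Lipschitz on fixed compact sets (so their differences on a sub-rectangle of diameter $O(n^{-k})$ are $O(n^{-k})$), I would obtain the recursion
\[
R_{\sigma_k}(f)\leq \Bigl(\prod_{\ell=1}^{k}\bar s_{i_\ell j_\ell}\Bigr)R_E(f)+C\sum_{\ell=1}^{k}\Bigl(\prod_{\ell'<\ell}\bar s_{i_{\ell'}j_{\ell'}}\Bigr)n^{-(k-\ell+1)}.
\]
Summing over $\sigma_k$ and using the product-of-sums identity $\sum_{\sigma_k}\prod_\ell\bar s_{i_\ell j_\ell}=(\sum_{i,j=1}^{n}\bar s_{ij})^k$ yields
\[
N_\delta(A)\leq 2n^{2k}+C'\,n^k\Bigl(\sum_{i,j=1}^{n}\bar s_{ij}\Bigr)^{k}+\text{lower order}.
\]
Taking $\log$ and letting $k\to\infty$ gives the upper bound $1+\log_n(\sum\bar s_{ij})$ when $\sum\bar s_{ij}>n$, and the bound $2$ in case~(2). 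Combined with the trivial lower bound $\dim_BA\geq\dim_B\pi(A)=\dim_BE=2$ (the coordinate projection $\pi:\mathbf{R}^3\to \mathbf{R}^2$ is $1$-Lipschitz), case~(2) is complete.

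For the lower bound in case~(1), the plan is to produce a matching estimate from below. The essential input is the non-collinearity assumption: since $f$ restricts to the non-affine interpolant $q_{\alpha_0}$ along the interior line $\{x_{\alpha_0}\}\times I_y$, there is a fixed constant $\mu>0$ measuring the deviation of $q_{\alpha_0}$ from affinity, which provides a genuinely positive base oscillation for $f$. A dual recursion
\[
R_{\sigma_k}(f)\geq \Bigl(\prod_\ell \underline s_{i_\ell j_\ell}\Bigr)\mu-C''\sum_{\ell}\Bigl(\prod_{\ell'<\ell}\underline s_{i_{\ell'}j_{\ell'}}\Bigr)n^{-(k-\ell+1)}
\]
then shows, under the assumption $\sum\underline s_{ij}>n$, that the Lipschitz error is dominated by the multiplicative term. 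Each $\delta$-cube meeting $Gr(f|_{E_{\sigma_k}})$ traps at most a vertical interval of length $\delta$, forcing at least $\lfloor R_{\sigma_k}(f)/\delta\rfloor-1$ cubes per sub-rectangle; summing and taking logarithms gives $\dim_BA\geq 1+\log_n(\sum\underline s_{ij})$.

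The main obstacle is making the lower oscillation estimate truly rigorous: one must verify that the positive base oscillation coming from non-collinearity is not cancelled by the Lipschitz corrections $g_{ij},h_{ij}$ when iterating, and identify enough sub-rectangles $E_{\sigma_k}$ at each level through which the non-collinear line $\{x_{\alpha_0}\}\times I_y$ is transported with multiplicative scaling bounded below by $\prod\underline s_{i_\ell j_\ell}$. This is precisely where the conditions $\underline s_{ij}>0$ and the strict inequality $\sum\underline s_{ij}>n$ are genuinely used, the latter ensuring that the fractal term dominates the Lipschitz remainder in the asymptotic count.
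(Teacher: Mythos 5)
The paper itself offers no argument here beyond a citation (``the proof is similar to the one in [15]''), so the only meaningful comparison is with the standard oscillation argument that reference uses --- and your outline is indeed that standard argument. Your upper bound and case 2) are essentially sound: the covering count $N_\delta(A)\leq\sum_{\sigma_k}(n^kR_{\sigma_k}(f)+2)$, the recursion for $R_{\sigma_k}(f)$ with the Lipschitz remainder $O(n^{-k})$ per level, the identity $\sum_{\sigma_k}\prod_\ell\bar s_{i_\ell j_\ell}=(\sum\bar s_{ij})^k$, and the projection argument giving $\dim_BA\geq 2$ are all correct and complete modulo routine bookkeeping.

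The genuine gap is the lower bound in case 1), and you have named it yourself without closing it. It is worse than a routine verification in this particular construction, because the scaling factors satisfy $s_{ij}|_{\partial E_{ij}}=0$ by hypothesis \eqref{eq2}. Consequently, for a continuous $s_{ij}$ the infimum of $|s_{ij}|$ over the open cell $\mathring E_{ij}$ is $0$, and the per-cell contraction of the vertical oscillation is \emph{not} uniformly bounded below by a positive $\underline s_{ij}$ on all sub-rectangles $E_{\sigma_k}$: any $E_{\sigma_{k-1}}$ touching $\partial E$ is mapped by $L_{ij}$ into a neighbourhood of $\partial E_{ij}$ where $s_{ij}$ is arbitrarily small, so the claimed recursion $R_{\sigma_k}(f)\geq(\prod_\ell\underline s_{i_\ell j_\ell})\mu-\cdots$ fails for those addresses. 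A correct proof must either reinterpret $\underline s_{ij}$ as a minimum over a compact subset $K_{ij}\subset\mathring E_{ij}$ and restrict the count to the (still exponentially numerous) addresses $\sigma_k$ whose cells remain in the corresponding nested compacta, then check that the resulting count still produces the exponent $\log_n\sum\underline s_{ij}$, or otherwise quantify how much oscillation survives near the boundaries. You also need to verify that the seed oscillation $\mu$ coming from the non-affinity of $q_{\alpha_0}$ actually propagates: the line $\{x_{\alpha_0}\}\times I_y$ lies on cell boundaries where $f$ is pinned to the interpolants and $s_{ij}$ vanishes, so the transported oscillation must be located in the \emph{interiors} of suitable $E_{\sigma_k}$, and the possible cancellation against $g_{ij}$ and $h_{ij}$ must be controlled. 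None of this is done, so case 1)'s lower bound remains unproved in your proposal.
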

\begin{proof}
The proof is similar to one in \cite{yun1}. Look at the Theorem 1 and Remark 2.
\end{proof}


\end{document}